\documentclass[11pt]{amsart}
\usepackage[latin1]{inputenc}
\usepackage[english]{babel}
\usepackage[T1]{fontenc}
\usepackage{amsmath,amssymb}
\usepackage{stmaryrd}
\usepackage{enumerate}
\usepackage{amsthm}
\usepackage{mathrsfs}
\usepackage{geometry}
\usepackage{mathtools}
\usepackage{graphicx}
\usepackage[draft,danish]{fixme}
\usepackage[all,line]{xy}
\mathtoolsset{showonlyrefs}
\newtheorem{thm}{Theorem}[section]
\newtheorem{prop}[thm]{Proposition}
\newtheorem{lemma}[thm]{Lemma}
\theoremstyle{definition}
\newtheorem{defin}[thm]{Definition}
\theoremstyle{remark}
\newtheorem{rem}[thm]{Remark}

\newtheorem{claim}[thm]{Claim}

\newcommand{\iso}{\simeq}
\newcommand{\isomap}{\stackrel{\sim}{\to}}
\newcommand{\desc}{{\mathfrak{Desc}}}

\newcommand{\ccal}{\mathcal{C}}
\newcommand{\R}{\mathbb{R}}

\newcommand{\She}{\mathcal{O}}
\newcommand{\Hom}{\text{Hom}}

\newcommand{\rep}{\mathsf{Rep}}
\newcommand{\act}{\; \rotatebox[origin=c]{270}{$\circlearrowright$} \;}

\begin{document}

\title{Demazure descent and representations of reductive groups}
\author{Sergey Arkhipov}
\author{Tina Kanstrup}
\address{S.A. Matematisk Institut, Aarhus Universitet, Ny Munkegade, DK-8000 , Århus C, Denmark, email: hippie@qgm.au.dk} 
\address{T.K. Centre for Quantum Geometry of Moduli Spaces, Aarhus Universitet, Ny Munkegade, DK-8000 , Århus C, Denmark, email: tina@qgm.au.dk}
\maketitle

\begin{abstract}We introduce the notion of Demazure descent data on a triangulated category $\ccal$ and define the descent category for such data. We illustrate the definition by our basic example. Let $G$ be a reductive algebraic group with a Borel subgroup $B$. Demazure functors form Demazure descent data on $D^b (\rep(B))$ and the descent category is equivalent to $D^b (\rep(G))$.
\end{abstract}

\section{Motivation}
The present paper is the first one in series devoted to various cases of categorical descent. Philosophically, our interest in the subject grew out of attempts to understand the main construction from the recent paper by Ben-Zvi and Nadler \cite{BN} in plain terms that would not involve higher category theory. 

\subsection{Beilinson-Bernstein localization and derived descent}
Let $G$ be a reductive algebraic group with the Lie algebra $\mathfrak{g}$. Denote the Flag variety of $G$ by $\mathfrak{Fl}$. A major part of Geometric Representation Theory originated in the seminal work of Beilinson and Bernstein \cite{BB} devoted to  investigation of the globalization functor $\mathsf{D}\operatorname{-}\mathsf{mod}(\mathfrak{Fl})\to U(\mathfrak{g})\operatorname{-}\mathsf{mod}$. This functor turns out to be fully faithful and provides geometric and topological tools to investigate a wide class of $U(\mathfrak{g})$-modules, in particular the ones from the famous category $\mathcal{O}$. Various generalizations of this result lead to investigation of the categories of twisted D-modules on the Flag variety and on the base affine space for $G$, and of their derived categories. 

Ben-Zvi and Nadler define a certain comonad acting on a  higher categorical version for the derived category  of D-modules on the base affine space. In fact, the functor is built into the higher categorical treating of Beilinson-Bernstein localization-globalization construction. 

Using the heavy machinery of Barr-Beck-Lurie descent, the authors  argue that the derived category of $U(\mathfrak{g})$-modules is equivalent to  the category of D-modules equivariant with respect to this comonad. Thus the global sections functor becomes equivariantization with respect to the action. The comonad is called the Hecke comonad. It provides a categorification for the classical action of the Weyl group on various homological and K-theoretic invariants of the Flag variety. 

Notice that the descent construction fails to work on the level of the usual triangulated categories. Ideally one would like to replace it by a categorical action of the Weyl group or rather of the Braid group on categories of D-modules related to the Flag variety. One would  need to define a notion of  "invariants" with respect to such action.

\subsection{Descent in equivariant K-theory}
Another  source of inspiration for the present paper, which is in a way closer to our work,  is a recent article of  Harada,  Landweber and Sjamaar \cite{HLS}. Given a compact space $X$ with an action of a compact reductive Lie group $G$, the authors express the $G$-equivariant K-theory of $X$ via the $T$-equivariant one. Here $T$ denotes a fixed maximal torus in $G$. Harada et al. show that the natural action of the Weyl group $W$ on $K_T(X)$ extends to an action of a degenerate Hecke ring generated by divided difference operators which was introduced earlier in the context of Schubert calculus by Demazure. The operators are called Demazure operators.  

The main result in the paper  \cite{HLS} states that the  ring $K_G(X)$ is isomorphic to the subring of $K_T(X)$ annihilated by the augmentation ideal in the degenerate Hecke algebra. In other words, a $T$-equivariant class is $G$-equivariant if and only if it is killed by the Demazure operators. 

In the present paper, we define a notion of Demazure descent  on a triangulated category $\mathcal{C}$. Thus Demazure operators are replaced by Demazure functors. These functors satisfy a categorified version of degenerate Hecke algebra relations and form a {\it{Demazure descent data}} on $\mathcal{C}$. We define the {\it{descent category}} for such data. Demazure descent is supposed to be a technique replacing the naive notion of Weyl group invariants, on the categorical level.

We provide the first example of Demazure descent. Consider a reductive algebraic group $G$, fix a Borel subgroup $B\subset G$. Categorifying the construction form \cite{HLS}, we consider Demazure functors $D_{s_i}$ acting on the derived category  of $B$-modules.  We prove that the functors form a Demazure descent data and identify the descent category with the derived category of $G$-modules.

\subsection{Acknowledgements}
The authors are grateful to H.H. Andersen, C. Dodd, V. Ginzburg, M. Harada and R. Rouquier for many stimulating discussions. The project started in the summer of 2012 when the first named author visited IHES. S.A. is grateful to IHES for perfect working conditions. Both authors' research was supported in part by center of excellence grants "Centre for Quantum Geometry of Moduli Spaces" and by FNU grant "Algebraic Groups and Applications".

\section{The setting.}
\subsection{Root data.} Let $G$ be a reductive algebraic group over an algebraically closed field $k$ of characteristic zero. Let $T$ be a Cartan subgroup of $G$ and let $(I, X, Y )$ be the corresponding root data, where $I$ is the set of vertices of the Dynkin diagram, $X$ is the weight lattice of $G$ and $Y$ is the coroot lattice of $G$. Choose a Borel subgroup $T\subset B\subset G$. Denote the set of roots for $G$ by $\Phi=\Phi^+\sqcup \Phi^-$. Let $\{\alpha_1, \dots, \alpha_n\}$ be the set of simple roots. The Weyl group $W=\text{Norm}(T)/T$ of the fixed maximal torus acts naturally on the lattices $X$ and $Y$ and on the $\R$-vector spaces spanned by them, by reflections in root hyperplanes. The simple reflection corresponding to an $\alpha_i$ is denoted by $s_i$. The elements $s_1, \ldots, s_n$ form a set of generators for $W$. For $w\in W$ denote the length of a minimal expression of $w$ via the generators by $\ell(w)$. We have a partial ordering on $W$ called the Bruhat ordering. $w' \leq w$ if there exists a reduced expression for $w'$ that can be obtained from a reduced expression for $w$ by deleting a number of simple reflections. 

The monoid $\text{Br}^+$ with generators $\{T_w,\ w\in W\}$ and relations 
$$T_{w_1}T_{w_2}=T_{w_1w_2}  \text{ if } \ell(w_1)+\ell(w_2)=\ell(w_1w_2) \text{ in } W
$$
is called the braid monoid of $G$.

\subsection{Categories of representations.} For an algebraic group $H$, we denote the Hopf algebra of polynomial functions on $H$ by $\mathcal{O}(H)$. Let $\rep(H)$ be the category of $\mathcal{O}(H)$-comodules. This is an  Abelian tensor category.

Let $P_i$ be the parabolic subgroup of $G$ containing $B$ whose Levi subgroup has the root system  $\{\alpha_i,-\alpha_i\}$.  Using the natural Hopf algebra maps  $\She(G)\to \She(B)$ and $\She(P_i) \to \She(B)$ we can get restriction functors 
$$\text{Res}_i:\ \rep(P_i) \to \rep(B), \text{ and } \text{Res}:\ \rep(G) \to \rep(B). 
$$
The restriction functors are exact and naturally commute with taking tensor product of representations. Let $H$ be a subgroup of $G$ and $M \in \rep (G)$. Define the $H$-invariant part of $M$ to be $M^H:=\Hom_{\rep(H)}(k,M)$. Consider the induction functors
\begin{align} 
  &\text{Ind}_i: \rep(B) \to \rep(P_i), \qquad M \mapsto (\She(P_i) \otimes M)^B,\\
	&\text{Ind}: \rep(B) \to \rep(G), \qquad M \mapsto (\She(G) \otimes M)^B.
\end{align}
Set $\Delta_i :=\text{Res}_i \circ \text{Ind}_i \act \rep(B)$ and $\Delta :=\text{Res} \circ \text{Ind} \act \rep(B)$. Notice that $\Delta_i$ and $\Delta$ are left exact, since the induction functors are left exact.

\subsection{The derived categories.} For an algebraic group $H$, the regular comodule $\She(H)$ is injective in $\rep(H)$, moreover for any  $M\in\rep(H)$ the coaction map $M\to \She(H)\otimes M$ provides an embedding of $M$ into an injective object. In particular, $\rep(H)$ has enough injectives. The algebraic De Rham complex $\Omega^\bullet(H)$ provides an injective resolution for the trivial comodule, of the length equal to the dimension of $H$. For any  $M\in\rep(H)$ the complex $\Omega^\bullet(H)\otimes M$ provides an injective resolution for $M$ of the same length.

 Consider now the bounded derived categories $D^b(\rep(B)), D^b(\rep(P_i))$ and $D^b(\rep(G))$.
Let $L_i$ and $L$ be the derived functors of $\text{Res}_i$ and $\text{Res}$ respectively. Denote the right derived functors of $\text{Ind}_i$ and $\text{Ind}$ by $I_i$ and $I$ respectively. Let $D_i = L_i\circ I_i$ and $D=L \circ I$ be the right derived functors of $\Delta_i$ and $\Delta$ respectively.

\begin{prop} \label{properties}
\vskip 1mm
\noindent
\begin{enumerate}[(a)]
\item The functors $L_i$ and $L$ are left adjoint to $I_i$ and $I$ respectively. \label{adjoint}
\item For $M \in D^b(\rep(B))$ and $N \in D^b(\rep(P_i))$ (resp., for $M \in D^b(\rep(B))$ and $N \in D^b(\rep(G))$) we have the tensor identities: \label{tensor}
\[ I_i(M \otimes L_i(N)) \iso I_i(M) \otimes N. \quad (\text{ resp., } I(M \otimes L(N)) \iso I(M) \otimes N).\]
\item The functors $I_i$ and $I$ take the trivial $\She(B)$-comodule to the trivial $\She(P_i)$-comodule (resp., to the trivial $\She(G)$-comodule). \label{trivial}
\item $D_i$ and $D$ are comonads for which the comonad maps $D_i \to D_i^2$ and $D \to D^2$ are isomorphisms.
\end{enumerate}
\end{prop}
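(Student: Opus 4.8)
The plan is to obtain (a)--(c) as derived upgrades of standard facts about restriction and (co)induction for algebraic groups, and then to deduce (d) by feeding (b) and (c) into the comonad formalism. I will argue for $\text{Res}, \text{Ind}$ throughout; the parabolic case of $\text{Res}_i, \text{Ind}_i$ is verbatim the same. For (a), the input is Frobenius reciprocity: at the abelian level $\text{Res}$ is left adjoint to $\text{Ind}$. Since $\text{Res}$ is exact, $L$ is computed without resolving; and since $\text{Ind}$ is right adjoint to the exact functor $\text{Res}$, it carries injectives to injectives, so $I$ is computed on injective resolutions. I would then invoke the general principle that an adjoint pair $(F,G)$ with $F$ exact and $G$ preserving injectives descends to an adjoint pair $(F,\mathbb{R}G)$ on bounded derived categories; applied here this yields $L\dashv I$.

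For (c), I would compute $\text{Ind}(k)=\She(G)^B=\She(G/B)$, the global regular functions on the projective variety $G/B$, which are just the constants $k$; the higher derived functors are $R^i\text{Ind}(k)\iso H^i(G/B,\She_{G/B})$, which vanish for $i>0$ by Kempf vanishing (equivalently, by rationality of $G/B$), so $I(k)\iso k$ in $D^b(\rep(G))$. For $\text{Ind}_i$ one uses $P_i/B\iso\Proj^1$ and the vanishing $H^i(\Proj^1,\She)=0$ for $i>0$. For (b), I would start from the underived projection formula $\text{Ind}(M\otimes\text{Res}(N))\iso\text{Ind}(M)\otimes N$, a standard consequence of moving the $G$-action on $N$ past the invariants. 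To derive it, choose an injective resolution $M\to J^\bullet$ in $\rep(B)$. Because $\She(B)\otimes V\iso\She(B)\otimes V_{\mathrm{triv}}$ is injective for every comodule $V$, tensoring by $\text{Res}(N)$ preserves injectivity, and since $\otimes$ is exact over $k$ the complex $J^\bullet\otimes\text{Res}(N)$ is an injective resolution of $M\otimes\text{Res}(N)$; applying $\text{Ind}$ termwise and using the underived identity gives $I(M\otimes L(N))\iso I(M)\otimes N$.

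For (d), the adjunction $L\dashv I$ of (a) makes $D=L\circ I$ a comonad on $D^b(\rep(B))$, with counit the adjunction counit $\epsilon\colon LI\to\mathrm{Id}$ and comultiplication $\delta=L\eta I\colon LI\to LILI$, where $\eta\colon\mathrm{Id}\to IL$ is the unit. To see that $\delta$ is an isomorphism it suffices to show $\eta_{I(M)}\colon I(M)\to ILI(M)$ is an isomorphism for all $M$, since $\delta_M=L(\eta_{I(M)})$. Writing $LI(M)=k\otimes L(I(M))$ and applying the tensor identity (b) with the trivial comodule in place of $M$ and $I(M)\in D^b(\rep(G))$ in place of $N$ gives
\[
ILI(M)=I\bigl(k\otimes L(I(M))\bigr)\iso I(k)\otimes I(M),
\]
which by (c) equals $k\otimes I(M)=I(M)$. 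Applying $L$ then makes $\delta_M$ an isomorphism.

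The main obstacle I anticipate is precisely this last identification: one must check that the composite isomorphism $I(M)\isomap ILI(M)$ manufactured from (b) and (c) is genuinely the adjunction unit $\eta_{I(M)}$, rather than merely an abstract isomorphism of the two objects, and dually that $\epsilon$ together with this $\delta$ obeys the coassociativity and counit axioms of a comonad. This is a naturality and coherence verification, which I would carry out by unwinding the tensor identity against the explicit descriptions of $\eta$ and $\epsilon$ coming from Frobenius reciprocity.
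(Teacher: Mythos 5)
Your proposal is correct and takes essentially the same route as the paper: (a) and (b) are derived upgrades of Frobenius reciprocity and the tensor identity from \cite{Jant}, and (d) is deduced exactly as in the paper's proof, by applying (b) with the trivial comodule in the first slot together with (c) to get $I L I(M) \iso I(M)$, i.e.\ $I\circ L \iso \mathrm{Id}$. Regarding the one obstacle you flag (which the paper's own proof silently elides): the coassociativity and counit axioms for $(\epsilon,\delta)$ hold automatically for the comonad of any adjunction, and the identification of your composite isomorphism with the adjunction unit can be avoided by a purely formal argument rather than by unwinding Frobenius reciprocity --- natural endotransformations of the identity functor form a commutative monoid, so any monad whose underlying functor is isomorphic to $\mathrm{Id}$ (here $I\circ L$, via your (b)+(c) computation, which is natural in $N$) has invertible unit, whence $\eta_{I(M)}$ and therefore $\delta_M = L(\eta_{I(M)})$ are isomorphisms.
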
 
\begin{proof}
The statements corresponding to (\ref{adjoint}) and (\ref{tensor}) for Res and Ind (resp. $\text{Res}_i$ and $\text{Ind}_i$) are proposition 3.4 and 3.6 in \cite{Jant}. The derived functors of a pair of adjoint functors are adjoint. (\ref{tensor}) also follows from thes statement for the non-derived functors since tensoring over a field is exact.
\[ I_i( \text{id} \otimes L_i) \iso R(\text{Ind}_i(\text{id} \otimes \text{Res}_i) \iso R(\text{Ind}_i \otimes \text{id}) \iso I_i \otimes \text{id}. \]
By (\ref{adjoint}) $D_i=L_i \circ I_i$ and $D=L \circ I$ are comonads (See \cite[section VI.1]{Mac}). (\ref{tensor}) and (\ref{trivial}) implies that $I_i \circ L_i(N) \iso N$ for $N \in D^b(\rep(P_i))$ and $I\circ L(N) \iso N$ for $N \in D^b(\rep(G))$. Thus, $\text{Id} \isomap I_i \circ L_i$ (resp. $\text{Id} \isomap I \circ L$) and from this we get the desired isomorphism
\[ D_i=L_i \circ I_i = L_i \circ \text{Id} \circ I_i \isomap L_i \circ I_i \circ L_i \circ I_i=D_i^2. \]
and likewise for $D$.
\end{proof}

\begin{rem} 
It follows that the restriction functors $L_i$ and $L$ are fully faithful.
\end{rem}

\section{Demazure descent.}
Fix a root data $(I,X,Y)$ of the finite type, with the Weyl group $W$ and the braid monoid $\text{Br}^+$. Consider a triangulated category $\ccal$. 

\begin{defin}
A weak braid monoid action on the category $\ccal$ is a collection of triangulated functors
\[ D_w : \ccal \to \ccal, \qquad w \in W \]
satisfying  braid monoid relations, i.e. for all $w_1, w_2 \in W$ there exist isomorphisms of functors  \[ D_{w_1} \circ D_{w_2} \iso D_{w_1w_2}, \qquad \text{if } \ell(w_1 w_2)= \ell(w_1)+\ell(w_2). \]
\end{defin}
Notice that we neither fix the braid relations isomorphisms nor impose any additional relations on them. 

\begin{defin}
Demazure descent data on the category $\ccal$ is a weak braid monoid action $\{D_w\}$ such that for each simple root $s_i$ the corresponding functor $D_{s_i}$ is a comonad for which the comonad map $D_{s_i} \to D_{s_i}^2$ is an isomorphism.
\end{defin}

Here is the central construction of the paper. Consider a triangulated category $\ccal$ with a fixed Demazure descent data $\{D_w,w\in W\}$ of the type $(I,X,Y)$.

\begin{defin}
The descent category  $\desc(\ccal,D_w,w\in W)$ is the full subcategory in $\ccal$ consisting of objects $M$ such that for all $i$ the cones of the counit maps 
$D_{s_i}(M) \stackrel{\epsilon}{\to} M$ are isomorphic to $0$. 
\end{defin}

\begin{rem}
Suppose that $\ccal$ has functorial cones. Then  $\desc(\ccal,D_w,w\in W)$ a full triangulated subcategory in $\ccal$ being the intersection of kernels of $\text{Cone}(D_{s_i} \to Id)$. However, one can prove this statement not using functoriality of cones. 
\end{rem}

\begin{lemma}
An object $M \in \desc(\ccal,D_w,w\in W)$ is naturally a comodule over each $D_{s_i}$.
\end{lemma}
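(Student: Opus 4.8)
The plan is to observe that the descent condition makes the counit invertible, and then to use the inverse counit as the coaction. Fix a simple root and write $D = D_{s_i}$ for the corresponding comonad, with counit $\epsilon \colon D \to \mathrm{Id}$ and comultiplication $\delta \colon D \to D^2$. If $M \in \desc(\ccal, D_w, w \in W)$, then by definition the cone of $\epsilon_M \colon D(M) \to M$ is isomorphic to $0$. In a triangulated category a morphism whose cone vanishes is an isomorphism (apply $\Hom(X,-)$ to the defining triangle and invoke Yoneda), so $\epsilon_M$ is an isomorphism for every $i$. I would then take the candidate coaction to be $\rho := \epsilon_M^{-1} \colon M \to D(M)$.

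Next I would verify the two comodule axioms for the pair $(M, \rho)$. The counit axiom $\epsilon_M \circ \rho = \mathrm{id}_M$ is immediate from $\rho = \epsilon_M^{-1}$. For coassociativity I must check that $\delta_M \circ \rho = D(\rho) \circ \rho$. The key is the comonad counit identity $D(\epsilon_M) \circ \delta_M = \mathrm{id}_{D(M)}$: since $\epsilon_M$ is invertible and $D$ is a functor, $D(\epsilon_M)$ is invertible with inverse $D(\epsilon_M^{-1}) = D(\rho)$, so the identity forces $\delta_M = D(\rho)$. Substituting, both sides of the coassociativity equation become $D(\rho) \circ \rho$, and the axiom holds automatically. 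Note that this uses only the comonad structure together with invertibility of $\epsilon_M$; the stronger hypothesis that $\delta$ is an isomorphism is not required here.

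Finally, to justify the word \emph{naturally}, I would check compatibility with morphisms. Given $f \colon M \to N$ in $\desc(\ccal, D_w, w \in W)$, naturality of $\epsilon$ gives $\epsilon_N \circ D(f) = f \circ \epsilon_M$; inverting the two counits yields $D(f) \circ \rho_M = \rho_N \circ f$, which is exactly the condition that $f$ be a morphism of $D$-comodules. The only genuinely delicate steps are the passage from a vanishing cone to an isomorphism and the ensuing identification $\delta_M = D(\rho)$; once these are in hand, the comodule axioms are formal consequences of the comonad identities, and the construction is performed independently for each $i$.
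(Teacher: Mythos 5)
Your proof is correct and rests on the same construction as the paper's: the vanishing cone makes $\epsilon_M$ invertible, and the coaction is $\rho=\epsilon_M^{-1}$. However, your verification of coassociativity takes a genuinely different route. The paper works with the counit identity $(\epsilon D)\circ\eta=\mathrm{id}_D$ (the paper writes $\eta$ for your $\delta$) and invokes the Demazure descent hypothesis that $\eta$ is an isomorphism to conclude that $\epsilon D$ is invertible with $\epsilon D=\eta^{-1}$; the coaction diagram is then read off from this identification. You instead use the other counit identity, $D(\epsilon_M)\circ\delta_M=\mathrm{id}_{D(M)}$, and since $D$ preserves the isomorphism $\epsilon_M$ this forces $\delta_M=D(\rho)$, after which coassociativity $\delta_M\circ\rho=D(\rho)\circ\rho$ is a tautology. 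Your version buys two things. First, as you note, it never uses invertibility of the comultiplication, so it proves the stronger statement that for \emph{any} comonad, an object on which the counit becomes invertible is automatically a comodule; the paper's argument is tied to the Demazure hypothesis. Second, it closes a step the paper leaves implicit: the comodule axiom requires the map $D(M)\to D^2(M)$ in the square to be $D(\rho)$, whereas the paper's diagram has $(\epsilon\circ D_{s_i})^{-1}$ there; these two maps do coincide under the paper's hypotheses (both $\epsilon_{D(M)}$ and $D(\epsilon_M)$ are left inverses of the isomorphism $\eta_M$ by the two counit laws, hence equal), but the paper does not spell this out, while your derivation of $\delta_M=D(\rho)$ sidesteps the issue entirely. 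Finally, you also verify that morphisms of $\desc(\ccal,D_w,w\in W)$ are morphisms of $D_{s_i}$-comodules, which the paper only asserts in the remark following the lemma.
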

\begin{proof}
By definition the comonad maps
\[ \eta: D_{s_i} \to D_{s_i}^2, \qquad \epsilon: D_{s_i} \to \text{Id} \]
makes the following diagram commutative
\begin{align}
	\xymatrix{& D_{s_i} \ar@{=}[ld] \ar[d]_\eta \\ \text{Id} \circ D_{s_i} & D_{s_i}^2 \ar[l]^(0.4){\epsilon \circ D_{s_i}}}
\end{align}
For Demazure descent data we require that $\eta$ is an isomorphism, so $\epsilon \circ D_{s_i}$ is also an isomorphism. Let $M \in \desc(\ccal,D_w,w\in W)$. That $\text{Cone}(D_{s_i}(M) \stackrel{\epsilon}{\to} M)$ is isomorphic to 0 is equivalent to saying that $D_{s_i}(M) \stackrel{\epsilon}{\to} M$ is an isomorphism. This gives the commutative diagram.
\begin{align}
	\xymatrix{M \ar[r]^{\epsilon^{-1}} \ar[d]_{\epsilon^{-1}} & D_{s_i}(M) \ar@{=}[ld] \ar[d]_\eta \\ D_{s_i}(M) \ar[r]_{(\epsilon \circ D_{s_i})^{-1}} & D_{s_i}^2(M)}
\end{align}
Thus, $\epsilon^{-1}$ satisfies the axiom for the coaction.
\end{proof}
\begin{rem}
Recall that in the usual descent setting either in Algebraic Geometry or in abstract Category Theory (Barr-Beck theorem) descent data includes a pair of adjoint functors and their composition which is a comonad. By definition, the descent category for such data is the category of comodules over this comonad. Our definition of $\desc(C,D_w,w\in W)$  for Demazure  descent data formally is not about comodules, yet the previous Lemma demonstrates that every object of $\desc(C,D_w,w\in W)$ is naturally  equipped with structures of a comodule over each $D_i$ and any morphism in $\desc(C,D_w,w\in W)$ is a morphism of $D_i$-comodules.
\end{rem}

\section{Main Theorem}

We now go back to considering $D_i=L_i \circ I_i$ and $D=L \circ I$.

\begin{prop}
Let $w \in W$ and let $w=s_{i_1} \cdots s_{i_n}$ be a reduced expression. Then $D_w:= D_{i_1} \circ \cdots \circ D_{i_n}$ is independent of the choice of reduced expression and the $D_w$'s form Demazure descent data on $\ccal=D^b(\rep(B))$.
\end{prop}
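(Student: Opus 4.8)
The plan is to verify two separate assertions: first, that the functors $D_i = L_i \circ I_i$ satisfy the braid monoid relations so that $D_w$ is well defined, and second, that they constitute Demazure descent data. The second assertion is essentially already in hand: Proposition~\ref{properties}(d) states precisely that each $D_i$ is a comonad with $D_i \to D_i^2$ an isomorphism, which is exactly the defining condition for Demazure descent data once the weak braid monoid action is established. So the substance of the proof lies entirely in the first assertion, the braid relations and the independence of reduced expression.

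To establish the braid relations, I would relate the composite $D_i = L_i \circ I_i$ to an induction-restriction functor associated to a larger subgroup, exploiting transitivity of induction and restriction. The key geometric input is that the parabolics $P_i$ generate $G$ and that products of the $D_i$ along a reduced word should compute a single derived induction-restriction through the corresponding Schubert cell or Bott--Samelson type tower. Concretely, I would first prove the rank-one and rank-two cases by hand: for a dihedral pair $\langle s_i, s_j \rangle$ one checks $\underbrace{D_i D_j D_i \cdots}_{m_{ij}} \iso \underbrace{D_j D_i D_j \cdots}_{m_{ij}}$, where $m_{ij}$ is the order of $s_i s_j$, by identifying both composites with the derived restriction-induction through the minimal parabolic containing $B$ with Weyl group $\langle s_i, s_j \rangle$. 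Matsumoto's theorem then reduces the general braid monoid relation $T_{w_1} T_{w_2} = T_{w_1 w_2}$ (when lengths add) to these dihedral moves, and the same reduction gives independence of the reduced expression for $D_w$.

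The concrete mechanism for the rank-one computation is the tensor identity of Proposition~\ref{properties}(\ref{tensor}) together with Proposition~\ref{properties}(\ref{trivial}): these yield $I_i \circ L_i \iso \mathrm{Id}$ on $D^b(\rep(P_i))$, as already extracted in the proof of Proposition~\ref{properties}. I would use this repeatedly to collapse the inner cancellations in a composite $D_{i_1} \cdots D_{i_n}$, rewriting it via transitivity of the derived induction and restriction functors $I = I_i \circ I'$ and $L = L' \circ L_i$ through intermediate parabolics. The anticipated payoff is that for any reduced word the composite is canonically isomorphic to $L' \circ I'$ for the induction-restriction through a single well-chosen subgroup depending only on $w$, which simultaneously settles both independence and the braid relations.

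The main obstacle will be controlling the isomorphisms of functors coherently enough to match the definition of a \emph{weak} braid monoid action without being forced into higher-categorical bookkeeping. Fortunately the definition only requires the existence of braid relation isomorphisms, with no compatibility imposed among them, so I would not need to track coherence data or verify any cocycle-type conditions; it suffices to exhibit, for each pair with additive lengths, one isomorphism $D_{w_1} \circ D_{w_2} \iso D_{w_1 w_2}$. The real care is needed in the dihedral case, where the two-sided reduced words of maximal length must be identified through the same parabolic; here I expect to lean on the explicit injective resolutions by De Rham complexes described in the setup to compute the derived functors and confirm the cancellations hold at the level of the derived category rather than merely up to the comonad structure.
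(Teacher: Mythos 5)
Your disposal of the comonad condition via Proposition~\ref{properties}(d) is correct, and your overall skeleton --- reduce the braid relations and independence of reduced expression to the rank-two (dihedral) case via Matsumoto's theorem --- is a legitimate alternative to the paper's route, which instead proves directly (Lemma~\ref{prod Pi}) that the subset $P_{i_1}\cdots P_{i_n}\subseteq G$ attached to a reduced word depends only on $w$. However, there is a genuine gap exactly where the mathematical content lies: your proposed mechanism for the dihedral case cannot work. In a composite such as $D_iD_jD_i = L_i I_i L_j I_j L_i I_i$ along a reduced word, no two adjacent induction--restriction functors carry the same index, so the cancellation $I_i\circ L_i\iso \mathrm{Id}$ extracted from Proposition~\ref{properties}(\ref{tensor}) and (\ref{trivial}) never applies; that identity says nothing about $I_i\circ L_j$ for $i\neq j$. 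Likewise, transitivity of induction only factors $I$ (induction to $G$) through a single intermediate parabolic; it does not convert a word of minimal-parabolic inductions into one induction through the rank-two parabolic. So the key claim --- that the composite along a reduced word for the longest element of $\langle s_i,s_j\rangle$ is the induction--restriction through the parabolic with that Weyl group --- is asserted, not proved, and an appeal to the De Rham injective resolutions is not a substitute: the statement is a Mackey-type theorem about induction through products of parabolics, not a formal consequence of adjunctions and tensor identities.

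The paper fills precisely this hole with two inputs you are missing. First, the combinatorial Lemma~\ref{prod Pi}: for a reduced word, $P_{i_1}\cdots P_{i_n}=\bigcup_{w'\leq w}Bw'B$, a set depending only on $w$ (proved by induction using the Bruhat-cell multiplication rules $(Bw'B)(BsB)\subseteq Bw'sB\cup Bw'B$, etc.). Second, the theorem of Cline--Parshall--Scott \cite[Thm.~3.1]{CPS}: the functor $\Delta_{i_1}\circ\cdots\circ\Delta_{i_n}$ is determined up to natural isomorphism by the subset $P_{i_1}\cdots P_{i_n}$. Together these give $\Delta_{i_1}\circ\cdots\circ\Delta_{i_n}\iso\Delta_{j_1}\circ\cdots\circ\Delta_{j_n}$ for any two reduced words, whence both independence and the braid relations at the abelian level, with no case analysis on dihedral subgroups at all. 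Note also a secondary debt in your plan: the paper proves the relations for the underived functors $\Delta_w$ and then passes to $D_w$ using $R(\Delta_{w_1}\circ\Delta_{w_2})\iso R(\Delta_{w_1})\circ R(\Delta_{w_2})$; if you insist on working directly with the derived functors $D_i$, you still owe an acyclicity argument of this kind before any abelian-level identification can be transported to $D^b(\rep(B))$.
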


\begin{lemma} \label{prod Pi}
Let $w=s_{i_1} \cdots s_{i_n}$ be a reduced expression. Then
\[ P_{i_1} \cdots P_{i_n}=\bigcup_{w' \leq w} Bw'B, \]
where the union is over all $w' \in W$ which is $\leq w$ in the Bruhat order.
\end{lemma}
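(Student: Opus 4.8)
The plan is to proceed by induction on $n$, reducing everything to the rank-one Bruhat decomposition $P_i = B \sqcup Bs_iB$ together with the standard multiplication rules for Bruhat cells. The geometric input I would record first is that for a simple reflection $s$ and any $w \in W$,
\[ BsB \cdot BwB = \begin{cases} BswB & \text{if } \ell(sw) = \ell(w)+1, \\ BswB \cup BwB & \text{if } \ell(sw)=\ell(w)-1, \end{cases} \]
and symmetrically for multiplication on the right by $BsB$; this follows from the computation inside the rank-one subgroup attached to $\alpha_i$. The base case $n=1$ is then precisely $P_{i_1}=B\sqcup Bs_{i_1}B = \bigcup_{w'\leq s_{i_1}} Bw'B$.

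For the inductive step, write $u = s_{i_1}\cdots s_{i_{n-1}}$ and $s = s_{i_n}$. Since $w$ is reduced its prefix $u$ is reduced as well, so $\ell(us) = \ell(u)+1$. By the inductive hypothesis $P_{i_1}\cdots P_{i_{n-1}} = \bigcup_{v \leq u} BvB$. Multiplying on the right by $P_{i_n}=B\cup BsB$ and using $BvB\cdot B = BvB$ together with the cell multiplication rule above, every summand $BvB\cdot BsB$ contributes $BvsB$, and contributes $BvB$ as well exactly when $vs<v$; the latter cells are already present in $\bigcup_{v\leq u}BvB$. Collecting terms yields
\[ P_{i_1}\cdots P_{i_n} = \bigcup_{v\leq u} BvB \;\cup\; \bigcup_{v\leq u} BvsB. \]

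It then remains to identify the index set with $\{x \in W : x \leq w\}$. Here I would invoke the subword characterization of the Bruhat order relative to the fixed reduced word $s_{i_1}\cdots s_{i_n}$: an element $x$ satisfies $x \leq w$ if and only if $x$ is represented by a subexpression of this word. Such a subexpression either omits the final letter $s$, giving an element $\leq u$, or uses it, giving an element of the form $vs$ with $v \leq u$. This gives exactly the set equality $\{x : x\leq w\} = \{v : v\leq u\} \cup \{vs : v \leq u\}$, and since distinct Weyl group elements index disjoint cells, the displayed union coincides cell-by-cell with $\bigcup_{x\leq w}BxB$.

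I expect the main obstacle to be the bookkeeping in the inductive step, namely ensuring that the two cases of the cell multiplication rule combine so that no cell is missed and none is overcounted, rather than any deep new ingredient; the genuinely nontrivial facts (the $BsB\cdot BwB$ product and the subword property) are standard. As a byproduct the argument shows the product is independent of the chosen reduced expression, since the right-hand side depends only on $w$.
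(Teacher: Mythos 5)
Your proof is correct, and its skeleton is the same as the paper's: induction on the reduced word, peeling off the last letter and multiplying $\bigcup_{v\le u}BvB$ by $P_{i_n}=B\cup Bs_{i_n}B$. Where you genuinely diverge is in the inputs used to close the bookkeeping. The paper invokes only the containment $(Bw'B)(BsB)\subseteq Bw'sB\cup Bw'B$ (Humphreys, Cor.~28.3) in the length-decreasing case, together with the equality $(Bw'B)(BsB)=Bw'sB$ in the length-increasing case; as a result it must discard the decreasing terms, retain only the cells $Bw's_{i_n}B$ with $w'<w's_{i_n}$, and then run a two-step re-indexing argument (the Claim inside the paper's proof) resting on a lifting-type property of the Bruhat order (Humphreys, \emph{Reflection Groups and Coxeter Groups}, Prop.~5.9) plus explicit subword manipulations. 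You instead use the full Tits-system equality $BvB\cdot BsB=BvsB\cup BvB$ when $vs<v$, which lets you keep every cell $BvsB$ with $v\le u$ and reduces the entire inductive step to the single index-set identity $\{x: x\le w\}=\{v: v\le u\}\cup\{vs: v\le u\}$, proved by the subword characterization. The trade-off: your route requires the sharper (but still standard) multiplication rule, and in exchange eliminates the paper's case analysis and its appeal to the lifting property, making the combinatorics essentially one-step; the paper gets by with a weaker geometric input at the cost of a longer combinatorial argument. One small point you should make explicit: when $vs<v$, appending $s$ to a reduced word for $v$ gives a \emph{non-reduced} subexpression, so to conclude $vs\le w$ either note directly that $vs<v\le u\le w$, or invoke the standard strengthening that arbitrary (not necessarily reduced) subexpressions of a reduced word for $w$ represent elements $\le w$.
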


\begin{proof}
The proof goes by induction on $n=\ell(w)$. It is true for $n=1$ by definition of $P_i$. Set $v=s_{i_1} \cdots s_{i_{n-1}}$. Using the hypotheses we get
\begin{align}
	P_{i_1} \cdots P_{i_{n-1}} P_{i_n}=\Bigl( \bigcup_{w' \leq v} Bw'B \Bigr) (B \cup Bs_{i_n}B)=\bigcup_{w' \leq v} Bw'B \cup \bigcup_{w' \leq v} (Bw'B)(Bs_{i_n}B)
\end{align}
Let $w'$ be any element in $W$ and $s$ a simple reflection. Then by \cite[Cor. 28.3]{Hum} we have $(Bw'B)(BsB) \subseteq Bw'sB \cup Bw'B$. Thus, if $w's_{i_n} \leq w' \leq v$ then $(Bw'B)(Bs_{i_n}B)$ is contained in the first union. If $w' \leq w's_{i_n}$ then we have $(Bw'B)(Bs_{i_n}B)=Bw's_{i_n}B$ by \cite[Lemma 29.3A and section 29.1]{Hum}. Thus, the product can be written as
\begin{align}
	P_{i_1} \cdots P_{i_n}&=\bigcup_{w' \leq v} Bw'B \enskip \cup \smashoperator[r]{\bigcup_{\substack{w' \leq v\\ w' \leq w's_{i_n}}}} \; Bw's_{i_n}B\\
	&=\bigcup_{w' \leq v} Bw'B \enskip \cup \smashoperator[r]{\bigcup_{\substack{w''s_{i_n} \leq v, \\ w''s_{i_n} \leq w''}}} \; Bw''B
\end{align}
\begin{claim}
The conditions $w''s_{i_n} \leq v$ and $w''s_{i_n} \leq w''$ is equivalent to the conditions $w'' \leq w$ and $w''s_{i_n} \leq w''$.
\end{claim}
\begin{proof}[Proof of the claim]
Assume that $w''s_{i_n} \leq v$. By \cite[Prop. 5.9]{Hum2} this implies that $w'' \leq v$ or $w'' \leq vs_{i_n}=w$. In both cases we get $w'' \leq w$ since $v \leq w$. Assume now that $w'' \leq w$ and $w''s_{i_n} \leq w''$. $w''$ has a reduced expression of the form
\[ w''=s_{i_1} \cdots \hat{s}_{i_{j_1}} \cdots \hat{s}_{i_{j_2}} \cdots \hat{s}_{i_{j_k}} \cdots s_{i_n}, \]
where the $\hat{}$ indicates that the term has been removed from the product. If $j_k \neq n$ then
\[ w''s_{i_n}=s_{i_1} \cdots \hat{s}_{i_{j_1}} \cdots \hat{s}_{i_{j_2}} \cdots \hat{s}_{i_{j_k}} \cdots s_{i_{n-1}} \leq s_{i_1} \cdots s_{i_{n-1}}=v. \]
If $j_k =n$ then $w'' \leq v$. Since $w''s_{i_n} \leq w''$ by assumption we get $w''s_{i_n} \leq v$.
\end{proof}

If $w' \leq v$ in the first union satisfies that $w's_{i_n} \leq w'$ then it is also contained in the second union. Using the claim we get
\[P_{i_1} \cdots P_{i_n}=\smashoperator[r]{\bigcup_{\substack{w' \leq v \\ w' \leq w's_{i_n}}}} \; Bw'B \enskip \cup \smashoperator[r]{\bigcup_{\substack{w'' \leq w, \\ w''s_{i_n} \leq w''}}} \; Bw''B\]
Assume that $w' \leq w$ and $w' \leq w's_{i_n}$. Then $w'$ has a reduced expression of the form
\[ w'=s_{i_1} \cdots \hat{s}_{i_{j_1}} \cdots \hat{s}_{i_{j_2}} \cdots \hat{s}_{i_{j_k}} \cdots s_{i_n}. \]
If $j_k=n$ then $w' \leq v$. If $j_k \neq n$ then $w's_{i_n} \leq v$, but since $w' \leq w's_{i_n}$ we get $w' \leq v$. Hence, the conditions $w' \leq v$ and $w' \leq w's_{i_n}$ can be replaced by $w' \leq w$ and $w' \leq w's_{i_n}$. Thus,
\begin{align}
	P_{i_1} \cdots P_{i_n}=\smashoperator[r]{\bigcup_{\substack{w' \leq w\\ w' \leq w's_{i_n}}}} \; Bw'B \enskip \cup \smashoperator[r]{\bigcup_{\substack{w'' \leq w, \\ w''s_{i_n} \leq w''}}} \; Bw''B=\bigcup_{w' \leq w} Bw'B.
\end{align}
This finishes the induction step.
\end{proof}

\begin{proof}[Proof of the proposition]
Let $w \in W$ and let $s_{i_1} \cdots s_{i_n}=s_{j_i} \cdots s_{j_n}$ be two reduced expressions for $w$. By lemma \ref{prod Pi} this implies that $P_{i_1} \cdots P_{i_n}=P_{j_1} \cdots P_{j_n}$. By \cite[Thm. 3.1]{CPS} the $B$-module structure of $\Delta_{i_1} \circ \cdots \circ \Delta_{i_n}$ is determined up to a natural isomorphism by the set $P_{i_1} \cdots P_{i_n}$. Hence
\[ \Delta_{i_1} \circ \cdots \circ \Delta_{i_n} \iso \Delta_{j_1} \circ \cdots \circ \Delta_{j_n} \]
Hence, for any choice of reduced expression we can define
\[ \Delta_w :=\Delta_{i_1} \circ \cdots \circ \Delta_{i_m}. \]
Let $w_1$ and $w_2$ be elements in $W$ such that $\ell(w_1 w_2)=\ell(w_1)+\ell(w_2)$. Pick reduced expressions $s_{i_1} \cdots s_{i_r}$ and $s_{j_1} \cdots s_{j_t}$ for $w_1$ and $w_2$ respectively. Then $s_{i_1} \cdots s_{i_r} s_{j_1} \cdots s_{j_t}$ is a reduced expression for $w_1w_2$ and we get braid relations for the $\Delta_w$
\[ \Delta_{w_1} \circ \Delta_{w_2}=\Delta_{i_1} \circ \cdots \circ \Delta_{i_r} \circ \Delta_{j_1} \circ \cdots \circ \Delta_{j_t}=\Delta_{w_1 w_2}. \]
Define
\[ D_w:=R(\Delta_w)=R(\Delta_{i_1} \circ \cdots \circ \Delta_{i_m})=R(\Delta_{i_1}) \circ \cdots \circ R(\Delta_{i_m})=D_{i_1} \circ \cdots \circ D_{i_m} \]
The braid relations for $D_w$ now follows from the braid relations for $\Delta_w$
\[ D_{w_1} \circ D_{w_2} = R(\Delta_{w_1}) \circ R(\Delta_{w_2}) \iso R(\Delta_{w_1} \circ \Delta_{w_2}) \iso R(\Delta_{w_1 w_2}) = D_{w_1w_2}. \qedhere\]
\end{proof}

\begin{thm}
$\desc(\ccal,D_w,w\in W)$ is equivalent to $D^b(\rep(G))$
\end{thm}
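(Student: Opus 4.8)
The plan is to prove that the derived restriction functor $L \colon D^b(\rep(G)) \to D^b(\rep(B))$ induces an equivalence onto $\desc(\ccal,D_w,w\in W)$. By the Remark following Proposition \ref{properties}, $L$ is fully faithful, and since $\desc(\ccal,D_w,w\in W)$ is a full subcategory of $\ccal=D^b(\rep(B))$, it suffices to show two things: that $L$ factors through the descent subcategory, and that the resulting functor is essentially surjective. Full faithfulness is then inherited for free.

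First I would check that $L(N)\in\desc(\ccal,D_w,w\in W)$ for every $N\in D^b(\rep(G))$. Writing $L=L_i\circ L'_i$, where $L'_i\colon D^b(\rep(G))\to D^b(\rep(P_i))$ denotes the derived restriction from $G$ to $P_i$, and setting $P:=L'_i(N)$, the relevant counit is $\epsilon_{L_i(P)}\colon L_i I_i L_i(P)\to L_i(P)$. The triangle identity for the adjunction $L_i\dashv I_i$ gives $\epsilon_{L_i(P)}\circ L_i(u_P)=\text{id}_{L_i(P)}$, where $u\colon\text{Id}\to I_i L_i$ is the unit. Since $L_i$ is fully faithful, $u$ is an isomorphism, hence so is $\epsilon_{L_i(P)}$. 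Therefore the cone of $D_{s_i}(L(N))\to L(N)$ vanishes for every $i$, and $L(N)$ lies in the descent category, so $L$ restricts to a fully faithful functor $D^b(\rep(G))\to\desc(\ccal,D_w,w\in W)$.

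The main step is essential surjectivity. Given $M\in\desc(\ccal,D_w,w\in W)$, I would take $N:=I(M)\in D^b(\rep(G))$ and show $L(N)=D(M)\iso M$. The key input is the identification $D\iso D_{w_0}$, where $w_0$ is the longest element of $W$: for a reduced expression $w_0=s_{i_1}\cdots s_{i_n}$, Lemma \ref{prod Pi} gives $P_{i_1}\cdots P_{i_n}=\bigcup_{w'\le w_0}Bw'B=G$, so by \cite[Thm. 3.1]{CPS} the functor $\Delta_{i_1}\circ\cdots\circ\Delta_{i_n}$ is isomorphic to $\Delta=\text{Res}\circ\text{Ind}$, and passing to derived functors yields $D\iso D_{w_0}=D_{i_1}\circ\cdots\circ D_{i_n}$. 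Now for $M$ in the descent category each counit $D_i(M)\to M$ is an isomorphism, so peeling the composite $D_{i_1}\cdots D_{i_n}(M)$ off from the innermost factor produces a chain $D_{w_0}(M)\isomap\cdots\isomap D_{i_1}(M)\isomap M$, each step being the functor $D_{i_1}\circ\cdots\circ D_{i_{k-1}}$ applied to the counit isomorphism $D_{i_k}(M)\isomap M$. Combining, $M\iso D(M)=L(I(M))$ exhibits $M$ in the essential image of $L$, and $L$ is an equivalence $D^b(\rep(G))\isomap\desc(\ccal,D_w,w\in W)$.

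I expect the identification $D\iso D_{w_0}$ to be the main obstacle. It requires the geometric input $P_{i_1}\cdots P_{i_n}=G$ for a reduced word of $w_0$ together with the Cline--Parshall--Scott comparison theorem, and one must make sure that the comonad $D=L\circ I$ attached to the full group $G$ genuinely coincides with the Demazure composite for $w_0$, rather than agreeing only up to some ambiguity in the identifications. Once this is secured, the descent condition collapses the length-$n$ composite to the identity on $\desc(\ccal,D_w,w\in W)$, and the equivalence follows formally from the adjunction together with the full faithfulness already established.
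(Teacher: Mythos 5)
Your proposal is correct and takes essentially the same route as the paper: your two steps (that $L$ lands in the descent category, via full faithfulness of $L_i$ and the triangle identity, and essential surjectivity via $D \iso D_{i_1} \circ \cdots \circ D_{i_N}$ for a reduced word of the longest element, using Lemma \ref{prod Pi} and the Cline--Parshall--Scott theorem, then peeling off counit isomorphisms) are precisely the two halves of the paper's claim that $\ker\bigl(\mathrm{Cone}(\mathrm{Id} \to D)\bigr) = \bigcap_i \ker\bigl(\mathrm{Cone}(\mathrm{Id} \to D_i)\bigr)$. The only difference is organizational—the paper identifies both sides as subcategories of $D^b(\rep(B))$ using that the image of $L$ is characterized by $D(M) \to M$ being an isomorphism, whereas you package the same content as a fully faithful, essentially surjective functor.
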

\begin{proof}
Let $M \in D^b(\rep(B))$. Being able to extend $M$ to an element in $D^b(\rep(G)$ is equivalent to $M$ being in the image of $L$. Assume $M=L(N)$ for some $N \in D^b(\rep(G))$. Then $D(M)=L \circ I \circ L(N) \iso L(N)=M$. If $D(M) \isomap M$ then $M \iso L(I(M))$, so $M$ is in the image of $L$. Thus, being in the image of $L$ is equivalent to $D(M) \to M$ being an isomorphism which is again equivalent to  $M \in \text{ker}(C)$, where $C:=\text{Cone}(D \to \text{Id})$. Set $C_i:=\text{Cone}(D_i \to \text{Id})$.

\begin{claim}
$\ker(C) = \bigcap_i \ker(C_i)$
\end{claim}
\begin{proof}[Proof of claim]
Assume that $M \in \ker(C)$. Then $M=L(N)$ for some $N \in D^b(\rep(G))$. But then $M=L_i(N_{|P_i})$ for all $i$, so $D_i(M) \to M$ is an isomorphism for all $i$. Hence, $M \in \bigcap_i \ker(C_i)$. Assume that $M \in \cap_i \ker(C_i)$. Then all $D_i(M) \to M$ are isomorphisms. Choose a reduced expression $s_{i_1} \cdots s_{i_N}$ for the longest element in the Weyl group. Then $P_{i_1} \cdots P_{i_N}=G$. By \cite{CPS} we have $D=D_{i_1} \circ \cdots \circ D_{i_N}$.
\begin{align}
D(M) &\iso D_{i_1} \circ \cdots \circ D_{i_N}(M)\\
& \iso D_{i_1} \circ \cdots \circ D_{i_{N-1}}(M) \iso \dots\\
& \iso D_{i_1}(M) \iso M.
\end{align}
Hence,
\[ \text{Cone}(D(M) \to M) \iso \text{Cone}\bigl(D(D(M)) \to D(M)\bigr). \]
By definition of a comonad we have the following commutative diagram
\begin{align}
	\xymatrix{& D \ar@{=}[ld] \ar[d]_\eta \\ \text{Id} \circ D & D^2 \ar[l]^(0.4){\epsilon D}}
\end{align}
Since $\eta$ is an isomorphism so is $\epsilon D$ and thus $\text{Cone}\bigl(D(D(M)) \to D(M)\bigr)=0$. This shows that $M \in \ker(C)$.
\end{proof}
From the claim we get that
\[ D^b(\rep(G))=\bigcap_i \ker(C_i), \]
which is exactly the descent category.
\end{proof}

\section{Further directions}

\subsection{Quantum groups.}   Fix a root data $(I,X,Y)$ of the finite type. Let $\mathsf{U}_{\mathcal{A}}$ be the Lusztig quantum group over the ring of quantum integers $\mathcal{A} =\mathbb{Z}[v,v^{-1}]$. Denote the quantum Borel subalgebra by $\mathsf{B}_{\mathcal{A}}$. For a simple root $\alpha_i$ the corresponding quantum parabolic sub algebra is denoted by $\mathsf{P}_{i,\mathcal{A}}$.

Following \cite{APK} we consider the categories of locally finite weight modules over $\mathsf{U}_{\mathcal{A}}$ (resp. over $\mathsf{B}_{\mathcal{A}}$, resp. over $\mathsf{P}_{i,\mathcal{A}}$) denoted by $\rep(\mathsf{U}_{\mathcal{A}})$ (resp. by $\rep(\mathsf{B}_{\mathcal{A}})$, resp. by $\rep(\mathsf{P}_{i,\mathcal{A}})$).
 We consider the corresponding  derived categories $D^b(
\rep(\mathsf{U}_{\mathcal{A}}))$, $D^b(
\rep(\mathsf{B}_{\mathcal{A}}))$ and $D^b(
\rep(\mathsf{P}_{i,\mathcal{A}}))$.

 Like in the reductive algebraic group case, the restriction functors 
 $$
 L:\ D^b(
\rep(\mathsf{U}_{\mathcal{A}}))\to D^b(
\rep(\mathsf{B}_{\mathcal{A}})) \text{ and } L_i:\ D^b(
\rep(\mathsf{P}_{i, \mathcal{A}}))\to D^b(
\rep(\mathsf{B}_{\mathcal{A}})) 
$$
are fully faithful and possess right adjoint functors denoted by $I$ (resp. by $I_i$).  Denote the comonad  $L_i \circ I_i$ by $D_i$. Andersen, Polo and Wen proved that the functors $D_i$ define a weak braid monoid action on the category $D^b(
\rep(\mathsf{B}_{\mathcal{A}}))$. One can easily prove that the functors form Demazure descent data. The corresponding descent category $\mathfrak{Desc}(D^b(
\rep(\mathsf{B}_{\mathcal{A}})), D_1,\ldots D_n)$ is equivalent to $D^b(
\rep(\mathsf{U}_{\mathcal{A}}))$.

\subsection{Equivariant sheaves.} Let $X$ be an affine scheme equipped with an action of a reductive algebraic group $G$. Fix a Borel subgroup $B\subset G$. Like in the main body of the present paper, consider the minimal parabolic subgroups in $G$ denoted by  $P_1,\ldots P_n$.
Denote the derived categories of quasicoherent sheaves on $X$ equivariant with respect to $G$ (resp., $B$, resp., $P_i$) by $D^b(QCoh^G(X))$ (resp., by $D^b(QCoh^B(X))$, resp. by $D^b(QCoh^{P_i}(X))$). We have the natural functors provided by restriction of equivariance
$
L:\ D^b(QCoh^G(X))\to D^b(QCoh^B(X)) \text{ and } L_i:\ D^b(QCoh^{P_i}(X))\to D^b(QCoh^B(X)).
$
These functors have the right adjoint ones $I$, resp. $I_1,\ldots I_n$. The comonads $D_1,\ldots D_n$ given by the  compositions of extension and restriction of equivariance define a Demazure descent data on the category $D^b(QCoh^B(X))$. The corresponding descent category is equivalent to $D^b(QCoh^G(X))$.

\subsection{Algebraic loop group}
For a simple algebraic group $G$ consider the algebraic loop group $LG=\text{Map}(\overset{\bullet}{D}, G)$ (resp. the formal arcs group $L^+G=\text{Map}(D,G)$). Here $D$ (resp. $\overset{\bullet}{D}$) denotes the formal disc (resp. the formal punctured disc). Consider the affine Kac-Moody central extension
\[ 1 \to \mathbb{G}_m \to \widehat{LG} \to LG \to 1. \]
The affine analog of the Borel subgroup $B \subset G$ is the Iwahori subgroup $Iw \subset L^+G$. Let $P_0, \dots, P_n$ be the standard minimal parahoric subgroups in $L^+G$. One considers the adjoint pairs of coinduction-restriction functors $I_0,L_0, \dots, I_n, L_n$ between  $D^b(\rep(Iw))$ and $D^b(\rep(P_i))$. Denote the comonads $L_i \circ I_i$ by $D_i$ for $i=0, \dots, n$. We claim that $D_0, \dots, D_n$ form affine Demazure descent data on $D^b(\rep(Iw))$. We conjecture that the descent category is equivalent to $D^b(\rep(\widehat{LG}))$ (direct sum of the categories over all positive integral levels).

\end{document}